\DeclareMathAlphabet{\mathscrbf}{OMS}{mdugm}{b}{n}
\definecolor{violet}{rgb}{0.0,0.2,0.7}
\definecolor{rouge2}{rgb}{0.8,0.0,0.2}
\renewcommand\subsection{\@startsection{subsection}{2}%
  \z@{.5\linespacing\@plus.7\linespacing}{-.5em}%
  {\normalfont\sffamily}}  
\renewcommand{\phi}{\varphi}
\newcommand{\map}{\dashrightarrow}
\newcommand{\wb}{\overline}
\renewcommand{\ge}{\geqslant}
\newcommand{\sE}{\mathscr{E}}
\newcommand{\sG}{\mathscr{G}}
\newcommand{\sH}{\mathscr{H}}
\newcommand{\sI}{\mathscr{I}}
\newcommand{\sL}{\mathscr{L}}
\newcommand{\sO}{\mathscr{O}}
\newtheorem{thm}{Theorem}[section]
\newtheorem{lemma}[thm]{Lemma}
\newtheorem{cor}[thm]{Corollary}
\newtheorem{prop}[thm]{Proposition}
\newtheorem*{thm*}{Theorem}
\theoremstyle{definition}
\newtheorem{defn-thm}[thm]{Definition-Theorem} 
\newtheorem{defn-lemma}[thm]{Definition-Lemma}
\theoremstyle{remark}
\newtheorem{claim}[thm]{Claim}
\newtheorem*{not-and-def}{Notation and definitions}
\newtheorem{rem}[thm]{Remark}
\numberwithin{equation}{section}
\def\factor#1.#2.{\left. \raise 2pt\hbox{$#1$} \right/\hskip -2pt\raise -2pt\hbox{$#2$}}
\begin{document} 

\title[On foliations with semi-positive anti-canonical bundle]{On foliations with semi-positive anti-canonical bundle}

\author{St\'ephane \textsc{Druel}}

\address{St\'ephane Druel: Univ Lyon, CNRS, Universit\'e Claude Bernard Lyon 1, UMR 5208, Institut Camille Jordan, F-69622 Villeurbanne, France.} 

\email{stephane.druel@math.cnrs.fr}


\subjclass[2010]{37F75}

\begin{abstract}
In this note, we describe the structure of regular foliations with semi-positive anti-canonical bundle on smooth projective varieties. 
\end{abstract}

\maketitle
{\small\tableofcontents}

\section{Introduction}

The purpose of this paper is to prove the following result that reduces the study of regular foliations with semi-positive anti-canonical bundle to the study of regular foliations with numerically trivial canonical class.

\begin{thm}\label{thm_intro:main}
Let $X$ be a complex projective manifold, and let $\sG$ be a foliation on $X$. Suppose that $\sG$ is regular, or that $\sG$ has a compact leaf. Suppose in addition that $-K_\sG$ is semi-positive. Then there exist a smooth morphism $\phi\colon X \to Y$ onto a complex projective manifold $Y$ and a foliation $\sE$ on $Y$ with $K_\sE \equiv 0$ such that $\sG=\phi^{-1}\sE$. \end{thm}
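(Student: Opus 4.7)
The plan is to split $\sG$ into an algebraically integrable sub-foliation $\sF \subseteq \sG$ with rationally connected leaves (realized as the relative tangent sheaf of a smooth morphism $\phi\colon X \to Y$) and a transverse quotient foliation $\sE$ on $Y$, and then to show $K_\sE \equiv 0$.

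The first step is to build $\sF$. Semi-positivity of $-K_\sG$ is in particular nefness; when $K_\sG \not\equiv 0$, a bend-and-break argument of Bogomolov--McQuillan type for foliations produces a covering family of $\sG$-tangent rational curves along which $-K_\sG$ has positive degree. The chain equivalence relation along these curves yields the foliation rc-quotient $\sF \subseteq \sG$: algebraically integrable, with rationally connected general leaf. Regularity of $\sG$ then forces $\sF$ to be a subbundle of $\sG$, hence regular, and since its leaves are projective and rationally connected, $\sF = T_{X/Y}$ for a smooth proper morphism $\phi\colon X \to Y$ onto a smooth projective manifold. In the compact leaf case, I would reduce to the regular setting by restricting to the compact leaf $L$ (on which $-K_L$ is semi-positive) and using the local product structure around a compact leaf to propagate the construction of $\sF$ to a neighbourhood.

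The second step is the descent. Since $\phi$ is smooth and $\sF = T_{X/Y} \subseteq \sG$, the quotient $\sG/\sF$ injects into $\phi^*T_Y$; regularity and involutivity of $\sG$ force it to be the pullback of an involutive subsheaf $\sE \subseteq T_Y$, so $\sG = \phi^{-1}\sE$. The foliated canonical bundle formula $K_\sG = K_\sF + \phi^*K_\sE$, combined with semi-positivity of $-K_\sG$ and the fiberwise negativity of $K_\sF$ (fibers rationally connected), yields nefness of $-K_\sE$; conversely, maximality of the rc-quotient $\sF$ rules out further $\sE$-tangent covering families of rational curves, forcing $K_\sE$ to be pseudo-effective. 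Combined, these give $K_\sE \equiv 0$.

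The main obstacle will be the final step. Obtaining nefness of $-K_\sE$ from semi-positivity of $-K_\sG$ requires a Viehweg-type positivity statement for $\phi_{*}$ adapted to the foliated canonical bundle formula, and obtaining pseudo-effectivity of $K_\sE$ requires a precise maximality assertion for the rc-quotient, ruling out $\sE$-tangent rational curves on $Y$ that would otherwise enlarge $\sF$. Both demand careful compatibility between foliated and non-foliated positivity notions across the smooth morphism $\phi$.
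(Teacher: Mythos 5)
Your overall architecture (foliated rc-quotient $\sF\subseteq\sG$, descent to a foliation $\sE$ on the quotient, then $K_\sE\equiv 0$ by playing pseudo-effectivity of $K_\sE$ against positivity of $-K_\sE$) matches the paper's. But there is a genuine gap at the step you pass over most quickly: the assertion that ``regularity of $\sG$ forces $\sF$ to be a subbundle of $\sG$, hence regular.'' This is false in general: a canonically defined subfoliation of a regular foliation need not be regular (already $\sG=T_X$ is regular for any manifold, while the foliation induced by the MRC fibration of a uniruled $X$ is typically singular along the singular fibers). Obtaining regularity of the rc-quotient foliation, and hence the smoothness of $\phi$, is precisely the hard point of the theorem, and it cannot precede the numerical work. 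The paper's order is the reverse of yours: it first proves $K_\sH\equiv K_\sG$ for the (a priori only meromorphic) rc-quotient $\sH$ --- using Viehweg-type weak positivity on the family of leaves to get $K_\sH-K_\sG$ pseudo-effective, and Campana--P\u{a}un plus Graber--Harris--Starr to get $K_\sE$ pseudo-effective, whence $K_\sH-K_\sG=-(\phi^*K_\sE+R)\equiv 0$ --- and only then concludes that $-K_\sH$ is semi-positive and invokes a separate regularity criterion (Proposition \ref{prop:criterion_regularity}): a foliation with a compact leaf and semi-positive anti-canonical bundle splits off $T_X$ as a direct summand. That criterion rests on the Demailly--Peternell--Schneider hard Lefschetz theorem for semi-positive line bundles and is not a formal consequence of anything in your outline; without it (or the LPT theorem it generalizes), your first step does not produce a smooth morphism.

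Two smaller points. First, your reduction of the compact-leaf case to the regular case by restricting to the leaf and ``propagating by the local product structure'' is not how this goes and is unlikely to work as stated (a neighbourhood of a compact leaf of a singular foliation carries no such product structure); the paper instead shows directly that the algebraic part of $\sG$ has a compact leaf in both cases, and the regularity criterion only needs a compact leaf, not regularity of $\sG$. Second, your final step is essentially right in substance --- the Viehweg-type positivity you call for is exactly \cite[Proposition 4.1]{druel15}, and the ``maximality of the rc-quotient'' argument is Campana--P\u{a}un combined with \cite{ghs03} --- though what one gets is pseudo-effectivity of $-\phi^*K_\sE$ rather than nefness of $-K_\sE$; pseudo-effectivity of both $K_\sE$ and $-K_\sE$ already gives $K_\sE\equiv 0$, so nothing stronger is needed.
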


In \cite{touzet}, Touzet obtained a foliated version of the Beauville-Bogomolov decomposition theorem
for codimension one regular foliations with numerically trivial canonical bundle on compact K\"ahler manifolds.
Pereira and Touzet then addressed regular foliations
$\sG$ with $c_1(\sG)=0$ and $c_2(\sG)=0$ on complex projective manifolds in \cite{pereira_touzet}. The structure of codimension two foliations with numerically trivial canonical bundle on complex projective uniruled manifolds is given in \cite{druel_bbcd2}.

\medskip

As a consequence of Theorem \ref{thm_intro:main}, we describe codimension one regular foliations with semi-positive anti-canonical bundle on complex projective manifolds.  

\begin{cor}\label{cor_intro:main}
Let $X$ be a complex projective manifold, and let $\sG$ be a codimension one foliation on $X$. Suppose that $\sG$ is regular, and that $-K_\sG$ is semi-positive. Then there exists a smooth morphism $\psi \colon Z \to Y$ of complex projective manifolds, as well as a finite \'etale cover $f \colon Z \to X$ and a foliation $\sE$ on $Y$ such that $f^{-1}\sG=\psi^{-1}\sE$. In addition, either $\dim Y =1$ and $\sG$ is algebraically integrable, or $Y$ is an abelian variety and $\sE$ is a linear foliation, or $Y$ is a $\mathbb{P}^1$-bundle over an abelian variety $A$ and $\sE$ is a flat Ehresmann connection on $Y \to A$.
\end{cor}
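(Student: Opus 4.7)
The strategy is to combine Theorem \ref{thm_intro:main} with Touzet's structure theorem \cite{touzet} for codimension one regular foliations with numerically trivial canonical class.

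First I would apply Theorem \ref{thm_intro:main} to $(X,\sG)$: this gives a smooth morphism $\phi\colon X\to Y_0$ onto a complex projective manifold and a foliation $\sE_0$ on $Y_0$ with $K_{\sE_0}\equiv 0$ such that $\sG=\phi^{-1}\sE_0$. Because $\phi$ is smooth and $T_\sG=(d\phi)^{-1}(\phi^*T_{\sE_0})$, both the regularity and the codimension transfer from $\sG$ to $\sE_0$: the foliation $\sE_0$ is regular of codimension one on $Y_0$. If $\sE_0$ is algebraically integrable (which happens in particular when $\dim Y_0=1$), then so is $\sG$; after replacing $Y_0$ by the curve parameterising the leaves of $\sE_0$ and composing $\phi$ with the corresponding fibration, one lands in the first case of the corollary with $Z=X$, $f=\mathrm{id}_X$, $\psi$ this fibration, and $\sE$ the zero foliation on a curve.

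Next, assuming $\dim Y_0\geq 2$ and $\sE_0$ not algebraically integrable, I would invoke Touzet's classification of codimension one regular foliations with numerically trivial canonical bundle on complex projective manifolds to obtain a finite \'etale cover $h\colon Y\to Y_0$ such that $\sE := h^{-1}\sE_0$ has one of the two remaining structures listed in the corollary: either $Y$ is an abelian variety with $\sE$ a linear foliation, or $Y$ is a $\mathbb{P}^1$-bundle over an abelian variety $A$ with $\sE$ a flat Ehresmann connection. Setting $Z:=X\times_{Y_0}Y$, the two projections yield a finite \'etale cover $f\colon Z\to X$ and, by base change of $\phi$, a smooth morphism $\psi\colon Z\to Y$. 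The compatibility of foliation pullback under smooth morphisms then gives
\[
f^{-1}\sG \;=\; f^{-1}\phi^{-1}\sE_0 \;=\; \psi^{-1}h^{-1}\sE_0 \;=\; \psi^{-1}\sE,
\]
completing the reduction.

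The hard step is the invocation of Touzet's theorem and the identification of its output with the precise dichotomy stated. Touzet's classification describes the foliation in terms of its transverse geometric structure (Euclidean, parabolic, projective, or hyperbolic); the work is to check that, in the projective setting with the algebraically integrable case already absorbed into case (i) and with $K_{\sE_0}\equiv 0$, the surviving transverse geometries—after an appropriate finite \'etale cover—collapse to exactly the two explicit models of (ii) and (iii). It is at this point that the proof genuinely rests on the foliated Beauville--Bogomolov theory of \cite{touzet}, rather than on Theorem \ref{thm_intro:main} alone.
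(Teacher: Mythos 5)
Your proposal follows essentially the same route as the paper, which deduces the corollary from Theorem \ref{thm_intro:main} combined with \cite[Th\'eor\`eme 1.2]{touzet}, citing \cite[Lemma 5.9]{bobo} for precisely the \'etale base-change/descent step that you carry out by hand with $Z=X\times_{Y_0}Y$. The one point to watch is your first case: even when $\sE_0$ is algebraically integrable, the smoothness of the induced fibration onto the curve (and hence possibly a further finite \'etale cover, so $f=\mathrm{id}_X$ need not suffice) is itself part of Touzet's classification rather than a consequence of Theorem \ref{thm_intro:main} alone.
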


We also prove that foliations with semi-positive anti-canonical bundle having a compact leaf are automatically regular.

\begin{cor}\label{cor_intro:main2}
Let $X$ be a complex projective manifold, and let $\sG$ be a foliation on $X$. Suppose that $\sG$ has a compact leaf, and that $-K_\sG$ is semi-positive. Then $\sG$ is regular.
\end{cor}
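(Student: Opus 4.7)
I apply Theorem~\ref{thm_intro:main} to $\sG$ to obtain a smooth surjective morphism $\phi\colon X\to Y$ and a foliation $\sE$ on $Y$ with $K_\sE\equiv 0$ such that $\sG=\phi^{-1}\sE$. Since $\phi$ is smooth, we have $\Sing(\sG)=\phi^{-1}(\Sing(\sE))$, and hence $\sG$ is regular if and only if $\sE$ is regular. It therefore suffices to prove that $\sE$ is regular.

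Next I descend the compact leaf. Let $L$ be a compact leaf of $\sG$ and set $L':=\phi(L)$. The short exact sequence $0\to T_{X/Y}\to T\sG\to \phi^{*}T\sE\to 0$ on $X\setminus\Sing(\sG)$ yields $\dim\sG=(\dim X-\dim Y)+\dim\sE$. If $\phi(L)$ is a single point then $\dim L\le \dim X-\dim Y$; combined with $\dim L=\dim\sG=(\dim X-\dim Y)+\dim\sE$, this forces $\dim\sE=0$. In this case $\sE$ is the zero foliation and $\sG=T_{X/Y}$ is tautologically regular, and we are done. Otherwise $L'$ is a positive-dimensional compact analytic subvariety of $Y$ contained in $Y\setminus\Sing(\sE)$ (because $L\subset\phi^{-1}(Y\setminus\Sing(\sE))$); since $\phi$ is a submersion, the image $L'$ is tangent to $\sE$ at its smooth points and satisfies $\dim L'=\dim\sE$. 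Hence the leaf of $\sE$ through any smooth point of $L'$ is contained in $L'$ and is therefore compact, so $\sE$ admits a compact leaf disjoint from $\Sing(\sE)$.

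It remains to prove — and this is the main obstacle — that any foliation $\sE$ on a projective manifold $Y$ with $K_\sE\equiv 0$ admitting a compact leaf $L'$ disjoint from $\Sing(\sE)$ is automatically regular. The normal bundle $N_{L'/Y}=N_\sE|_{L'}$ satisfies $c_1(N_{L'/Y})\equiv -K_Y|_{L'}$, and its cohomology controls the tangent-obstruction theory of $[L']$ in the Douady space of $Y$. My plan is to show that the relevant component of the Douady space is smooth at $[L']$ and that its universal family sweeps out a saturated open neighborhood $U$ of $L'$ in $Y\setminus\Sing(\sE)$ whose $\sE$-leaves are all compact deformations of $L'$. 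The restriction $\sE|_U$ is then a smooth proper algebraic fibration, hence in particular regular on $U$. To globalize, I would invoke the structure theorems of Touzet and of Pereira--Touzet for foliations with numerically trivial canonical class, which, in the presence of $K_\sE\equiv 0$, propagate the local fibration by compact leaves to the whole of $Y$ and thus force $\Sing(\sE)=\emptyset$. The principal technical difficulty is precisely this last step: turning the regularity of $\sE$ in a neighborhood of a single compact leaf into global regularity on $Y$.
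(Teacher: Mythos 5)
Your reduction via Theorem~\ref{thm_intro:main} and the descent of the compact leaf to a compact leaf of $\sE$ disjoint from $\Sing(\sE)$ are plausible (modulo some care in showing that $\phi(L)$ really is a single compact leaf of $\sE$), but the proof is not complete: the entire content of the statement is concentrated in your final step, which you only present as a ``plan''. The assertion that a foliation $\sE$ with $K_\sE\equiv 0$ on a projective manifold admitting a compact leaf is automatically regular is precisely \cite[Theorem 5.6]{lpt}; it is a genuine theorem, not something that follows from a routine Douady-space argument. Worse, the globalization you propose is circular: the structure theorems of Touzet and of Pereira--Touzet that you want to invoke concern \emph{regular} foliations (codimension one with trivial canonical bundle, resp.\ with $c_1=c_2=0$), so they cannot be used to establish regularity. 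As written, the argument therefore has a real gap exactly where you locate the ``principal technical difficulty''.

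The detour through Theorem~\ref{thm_intro:main} is also unnecessary. Proposition~\ref{prop:criterion_regularity} is stated for an arbitrary foliation with a compact leaf and semi-positive anti-canonical bundle on a compact K\"ahler manifold, and its conclusion ($T_X\cong\sG\oplus\sE$ as bundles, hence $\sG$ regular) is exactly Corollary~\ref{cor_intro:main2}, strengthened; the paper's proof of the corollary is the one-line application of this proposition (or, alternatively, of Theorem~\ref{thm_intro:main} combined with \cite[Theorem 5.6]{lpt}). The mechanism is moreover quite different from what you envisage: the compact leaf is used only to show that the class $[v\lrcorner\,\omega^r]\in H^r\big(X,\sO_X(K_\sG)\big)$ is non-zero, and the hard-Lefschetz-type theorem \cite[Theorem 0.1]{dps5} for the semi-positive line bundle $\sO_X(-K_\sG)$ then produces a holomorphic $r$-form $\beta$ with values in $\sO_X(-K_\sG)$ with $\beta(v)\neq 0$, whose kernel gives the splitting of $T_X$. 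No deformation theory of the compact leaf and no propagation argument are involved.
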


\subsection*{Outline of the proof}The main steps for the proof of Theorem \ref{thm_intro:main} are as follows. 
In the setup of Theorem \ref{thm_intro:main}, one readily checks that the general leaves of the algebraic part of $\sG$ (we refer to Section 1 for this notion) are compact. We then consider the induced relative maximal rationally connected fibration. This gives a foliation $\sH$ on $X$ whose general leaves are projective and rationally connected. Moreover, $\sG$ is the pull-back of a foliation $\sE$ on the space of leaves of $\sH$. A result of Campana and P\u{a}un together with a theorem of Graber, Harris and Starr then implies that $K_\sE$ is pseudo-effective. Using Viehweg's weak positivity theorem, one then concludes that $-K_\sH \equiv -K_\sG$ so that $-K_\sH$ is semi-positive as well. Applying a criterion for regularity of foliations with semi-positive anti-canonical bundle, which is established in Section 2, we obtain that $\sH$ is regular. The holomorphic version of Reeb stability theorem then implies that $\sH$ is induced by a smooth morphism onto a projective manifold, finishing the proof of Theorem \ref{thm_intro:main}.

\subsection*{Acknowledgements} 
We  would  like  to thank Andreas H\"oring for useful discussions.
The author was partially supported by the ALKAGE project (ERC grant Nr 670846, 2015$-$2020)
and the Foliage project (ANR grant Nr ANR-16-CE40-0008-01, 2017$-$2020).

\section{Foliations}

In this section, we have gathered a number of results and
facts concerning foliations which will later be used in the proofs.

\subsection{Definitions}

A \emph{foliation} on  a normal complex variety $X$ is a coherent subsheaf $\sG\subseteq T_X$ such that
\begin{enumerate}
\item $\sG$ is closed under the Lie bracket, and
\item $\sG$ is saturated in $T_X$. In other words, the quotient $T_X/\sG$ is torsion-free.
\end{enumerate}

The \emph{rank} $r$ of $\sG$ is the generic rank of $\sG$.
The \emph{codimension} of $\sG$ is defined as $q:=\dim X-r$. 

\medskip

The \textit{canonical class} $K_{\sG}$ of $\sG$ is any Weil divisor on $X$ such that  $\sO_X(-K_{\sG})\cong \det\sG$. 

\medskip

Let $X^\circ \subset X_{\textup{reg}}$ be the open set where $\sG_{|X_{\textup{reg}}}$ is a subbundle of $T_{X_{\textup{reg}}}$. 
A \emph{leaf} of $\sG$ is a maximal connected and immersed holomorphic submanifold $L \subset X^\circ$ such that
$T_L=\sG_{|L}$. A leaf is called \emph{algebraic} if it is open in its Zariski closure.

The foliation $\sG$ is said to be \emph{algebraically integrable} if its leaves are algebraic.

\subsection{Foliations described as pull-backs} \label{pullback_foliations}

Let $X$ and $Y$ be normal complex varieties, and let $\varphi\colon X\map Y$ be a dominant rational map that restricts to a morphism $\varphi^\circ\colon X^\circ\to Y^\circ$,
where $X^\circ\subset X$ and  $Y^\circ\subset Y$ are smooth open subsets.
Let $\sG$ be a foliation on $Y$. \emph{The pull-back $\varphi^{-1}\sG$ of $\sG$ via $\varphi$} is the foliation on $X$ whose restriction to $X^\circ$ is $(d\phi^\circ)^{-1}\big(\sG_{|Y^\circ}\big)$.

\subsection{The family of leaves}\label{family_leaves} We refer the reader to \cite[Remark 3.12]{codim_1_del_pezzo_fols} for a more detailed explanation.
Let $X$ be a normal complex projective variety, and let $\sG$ be an algebraically integrable foliation on $X$.
There is a unique normal complex projective variety $Y$ contained in the normalization 
of the Chow variety of $X$ 
whose general point parametrizes the closure of a general leaf of $\sG$
(viewed as a reduced and irreducible cycle in $X$).
Let $Z \to Y\times X$ denotes the normalization of the universal cycle.
It comes with morphisms

\begin{center}
\begin{tikzcd}
Z \ar[r, "\beta"]\ar[d, "\psi"] & X \\
 Y &
\end{tikzcd}
\end{center}

\noindent where $\beta\colon Z\to X$ is birational and, for a general point $y\in Y$, 
$\beta\big(\psi^{-1}(y)\big) \subseteq X$ is the closure of a leaf of $\sG$.
The morphism $Z \to Y$ is called the \emph{family of leaves} and $Y$ is called
the \emph{space of leaves} of $\sG$.

\subsection{Algebraic and transcendental parts}
Next, we define the \emph{algebraic} and \emph{transcendental} parts of
a foliation (see \cite[Definition 2]{codim_1_del_pezzo_fols}).

Let $\sG$ be a foliation on a normal variety $X$.
There exist  a normal variety $Y$, unique up to birational equivalence,  a dominant rational map with connected fibers $\varphi:X\map Y$, and a foliation $\sE$ on $Y$ such that the following holds:
\begin{enumerate}
	\item $\sE$ is purely transcendental, i.e., there is no positive-dimensional algebraic subvariety through a general point of $Y$ that is tangent to $\sE$; and
	\item $\sG$ is the pullback of $\sE$ via $\varphi$.
\end{enumerate}
The foliation on $X$ induced by $\varphi$ is called the  \emph{algebraic part} of $\sG$.

\medskip

The following result due to Campana and P\u{a}un will prove to be crucial for the proof of Theorem \ref{thm_intro:main}. 
Let $X$ be a normal projective variety, and let $A$ be an ample divisor on $X$.
Recall that a $\mathbb{Q}$-divisor $D$ on a normal projective variety is said to be \textit{pseudo-effective} if,  for any positive number $\varepsilon\in\mathbb{Q}$, there exists an effective $\mathbb{Q}$-divisor $D_\varepsilon$ such that
$D+\varepsilon A\sim_\mathbb{Q} D_\varepsilon$.

\begin{thm}\label{thm:campana_paun}
Let $X$ be a normal complex projective variety, and let $\sG$ be a foliation on $X$. If $K_\sG$ is not pseudo-effective, then $\sG$ is uniruled.
\end{thm}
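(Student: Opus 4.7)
The plan is to convert the failure of pseudo-effectivity of $K_\sG$ into positivity of $\sG$ along a movable curve, and then feed this into a foliated bend-and-break argument in the spirit of Miyaoka-Mori and Bogomolov-McQuillan. First, by the Boucksom-Demailly-P\u{a}un-Peternell duality between the pseudo-effective cone and the movable cone of curves, the hypothesis yields a movable curve class $[C]$ on $X$ with $K_\sG \cdot C < 0$, realized by a covering family $\{C_t\}_{t \in T}$ of irreducible curves. Replacing $X$ by a resolution and controlling how $K_\sG$ changes under birational pull-back of the foliation, I would reduce to the case where $X$ is smooth and a general member $C = C_{t_0}$ is smooth, passes through a general point of $X$, and satisfies $\deg(\sG|_C) = -K_\sG \cdot C > 0$.

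Next, the positivity of $\sG|_C$ has to be turned into a rational curve tangent to $\sG$ through a general point. The standard route is via characteristic $p$: spread $X$, $\sG$ and the family out over a finitely generated $\mathbb{Z}$-algebra and reduce modulo a sufficiently general prime $p$. Iterated Frobenius pull-backs of the Harder-Narasimhan filtration of $\sG|_C$ produce, after enough twists, non-zero morphisms from ample line bundles on $C$ into $\sG$, to which Mori's bend-and-break applies, yielding a rational curve through a general point of the reduction. The crucial additional ingredient beyond the classical Miyaoka-Mori story is \emph{tangency}: this rational curve must lie in a leaf of $\sG$. For that one exploits that $\sG$, being closed under Lie bracket, is $p$-closed after generic reduction mod $p$, so positive pieces of $\sG|_C$ integrate inside $\sG$, and the limiting rational curves of the bend-and-break lie in these integrable subsheaves. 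A standard specialization and lifting argument then returns such a rational curve to characteristic zero, showing that $\sG$ is uniruled.

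The main obstacle is precisely this tangency step: certifying that the positive Frobenius-twisted pieces produced on $C$ sit inside a $p$-closed subsheaf of $\sG$, so that the bend-and-break curves lie in leaves of $\sG$ rather than merely in $X$. Establishing this is the technical heart of the argument and is where the involutivity of $\sG$ is genuinely used; it is the foliated refinement due to Bogomolov-McQuillan, Kebekus-Sol\`a Conde-Toma and Campana-P\u{a}un. The remaining ingredients — BDPP duality, Harder-Narasimhan on a smooth curve, and Mori's bend-and-break — are a standard combination of well-established tools, applied here in the foliated setting.
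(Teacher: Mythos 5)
Your proposal is correct in spirit, but it is solving a different problem than the paper does: the paper's entire proof of this theorem is a one-line citation, namely ``apply \cite[Theorem 4.7]{campana_paun15} to the pull-back of $\sG$ on a resolution of $X$,'' whereas you have sketched the internal mechanism of that cited theorem. The one genuinely shared step is the reduction to a smooth ambient variety: since $K_\sG$ is only a Weil divisor on the normal variety $X$, one must pass to a resolution (and to the pulled-back foliation) before intersection-theoretic or slope-theoretic arguments make sense, and you do this. Beyond that, your outline --- BDPP duality to produce a movable class with $K_\sG\cdot C<0$, Harder--Narasimhan positivity on a general member of a covering family, reduction mod $p$, Frobenius amplification, bend-and-break, and the $p$-closedness of the reduction of an involutive subsheaf to force tangency --- is an accurate account of the Bogomolov--McQuillan/Kebekus--Sol\`a Conde--Toma/Campana--P\u{a}un circle of ideas underlying the citation. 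But as you yourself concede, the tangency step is the technical heart and you do not prove it; you attribute it to the very same sources the paper cites. So as a standalone argument your text is a (correct) roadmap rather than a proof, and as a comparison with the paper it ultimately rests on the identical external input. What your version buys is transparency about where involutivity of $\sG$ is actually used; what the paper's version buys is brevity and the assurance that the precise statement needed (including the singular/normal case, handled via the resolution) is exactly what \cite[Theorem 4.7]{campana_paun15} delivers.
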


\begin{proof}
This follows easily from \cite[Theorem 4.7]{campana_paun15} applied to the pull-back of $\sG$ on a resolution of $X$.
\end{proof}

\section{A criterion for regularity}

Let $\sG$ be a foliation with numerically trivial canonical class on a complex projective manifold, and assume that $\sG$
has a compact leaf. Then Theorem 5.6 in \cite{lpt} asserts that
$\sG$ is regular and that there exists a foliation on
$X$ transverse to $\sG$ at any point in $X$.  In this section, we extend
this result to our setting (see also \cite[Proposition 2.7.1]{dps5} and \cite[Corollary 7.22]{fol_zero_cc}).

\begin{prop}\label{prop:criterion_regularity}
Let $X$ be a compact K\"ahler manifold of dimension $n$, and let $\sG$ be a foliation of rank $r$ on $X$. Suppose that $\sG$ has a compact leaf and that $-K_\sG$ is semi-positive. Then there is a decomposition $T_X \cong \sG\oplus \sE$ of $T_X$ into subbundles. In particular, $\sG$ is regular.
\end{prop}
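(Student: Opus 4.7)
The strategy is to adapt the argument of \cite[Theorem 5.6]{lpt}, which handles the case $K_\sG\equiv 0$, to the present semi-positive setting. Let $L\subset X^\circ$ be a compact leaf of $\sG$, so that $\sG_{|L}=T_L$ and the normal bundle of $L$ in $X$ equals $N_{L/X}=(T_X/\sG)_{|L}$. The Bott partial connection on $T_X/\sG$ along $\sG$ restricts on $L$ to a genuine holomorphic flat connection on $N_{L/X}$, so that $N_{L/X}$ is encoded by a holonomy representation $\rho\colon\pi_1(L)\to\textup{GL}(N_{L/X,x})$ for some point $x\in L$.

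First I would restrict a smooth semi-positive Hermitian metric on $-K_\sG$ to $L$ to produce a semi-positive metric on $-K_L=(-K_\sG)_{|L}$, making $L$ a compact K\"ahler manifold with nef anti-canonical bundle. The aim is to combine this with the flat structure on $N_{L/X}$ in order to show that $N_{L/X}$ is numerically flat in the sense of Demailly--Peternell--Schneider. Once that is established, the induced vanishing of the Atiyah class of the short exact sequence $0\to\sG\to T_X\to T_X/\sG\to 0$ on an analytic neighborhood of $L$, in the spirit of \cite[Proposition 2.7.1]{dps5}, would produce a holomorphic subbundle $\sE\subset T_X$ defined near $L$ such that $T_X\cong\sG\oplus\sE$ on that neighborhood.

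The main obstacle is showing that the holonomy $\rho$ factors through a compact subgroup. This is exactly where the weakening of the hypothesis from $K_\sG\equiv 0$ to $-K_\sG$ semi-positive matters: in the numerically trivial case one has direct access to a flat Hermitian metric on $N_{L/X}$ via adjunction, whereas here one must appeal to a Bochner-type argument applied to the semi-positive metric on $-K_L$, together with the structure theory of compact K\"ahler manifolds with nef anti-canonical bundle, to constrain $\rho$. I expect that this rigidity step is where the bulk of the work lies.

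Once the local decomposition near $L$ is obtained, I would propagate it globally in the spirit of \cite[Corollary 7.22]{fol_zero_cc}: the locus of points $x\in X$ lying on a compact leaf of $\sG$ along which $T_X$ splits as above is both open, by the local step, and closed, by semicontinuity combined with the global semi-positivity of $-K_\sG$, hence equal to $X$. This yields the desired decomposition $T_X\cong\sG\oplus\sE$, and regularity of $\sG$ is immediate since $\sG$ is then a direct summand of the locally free sheaf $T_X$.
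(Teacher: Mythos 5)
There is a genuine gap --- in fact two --- and your route is quite far from the paper's. The paper's proof is a short, global Hodge-theoretic argument that never leaves $X$: one contracts $\omega^r$ with the $r$-vector $v\in H^0\big(X,\wedge^rT_X\otimes\sO_X(K_\sG)\big)$ defining $\sG$ to obtain a class $[v\lrcorner\,\omega^r]\in H^r\big(X,\sO_X(K_\sG)\big)$, which is nonzero because its restriction to the compact leaf $F$ is detected by the isomorphism $H^r(F,\Omega^r_F)\to H^r\big(F,\sO_X(K_\sG)_{|F}\big)$ (this is the \emph{only} use of the compact leaf); then the hard Lefschetz theorem of Demailly--Peternell--Schneider for the semi-positive line bundle $-K_\sG$ produces a holomorphic $(r,0)$-form $\beta$ with values in $\sO_X(-K_\sG)$ with $\beta(v)$ a nonzero constant, and the kernel of contraction with $\beta$ is the complement $\sE$. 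Semi-positivity enters only through that one citation, and the splitting is obtained globally in one stroke.

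Your plan, by contrast, defers precisely the step that carries all the content. First, you give no mechanism by which semi-positivity of $(-K_\sG)_{|L}=-K_L$ constrains the holonomy of the Bott connection on $N_{L/X}$: these live on different bundles, and a semi-positive metric on $-K_L$ says nothing a priori about the flat structure on the normal bundle (even $\det N_{L/X}$ need not have finite or unitary holonomy under your hypotheses). Writing that you ``expect this rigidity step is where the bulk of the work lies'' concedes that the proof is not there. Second, the globalization fails as stated: a splitting of $T_X$ on a neighborhood of $L$ does not make the locus of points lying on a compact leaf with a splitting open, because nearby leaves need not be compact --- that would require finiteness of the leaf holonomy (Reeb stability), which you have not established and which is essentially what you are trying to prove. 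Closedness is likewise only asserted. Since $\sG$ may have a single compact leaf and may a priori be singular away from it, the local construction never reaches the rest of $X$, whereas the conclusion demands a direct-sum decomposition of $T_X$ everywhere. I would recommend abandoning the leafwise approach here and looking at the contraction argument: the compact leaf is needed only to certify that a single cohomology class on $X$ does not vanish.
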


\begin{proof}
Let $\omega$ be a K\"ahler form on $X$, and let $v \in H^0\big(X,\wedge^rT_X\otimes\sO_X(K_\sG)\big)$ be a $r$-vector defining $\sG$. The contraction $v \lrcorner \,\omega^r$ of $\omega^r$ by $v$ is a $\wb{\partial}$-closed $(0,r)$-form with values in 
$\sO_X(K_\sG)$, and gives a class $$[v \lrcorner \,\omega^r] \in H^r\big(X,\sO_X(K_\sG)\big).$$

We first show that $[v \lrcorner \,\omega^r]\neq 0$. Let $F \subseteq X$ be a compact leaf. We have a commutative diagram

\begin{center}
\begin{tikzcd}
H^r(X,\Omega_X^r) \ar[r, "{v \lrcorner \,\bullet}"]\ar[d] & H^r\big(X,\sO_X(K_\sG)\big) \ar[d]\\
H^r(F,\Omega_F^r) \ar[r]  & H^r\big(F,\sO_X(K_\sG)_{|F}\big).
\end{tikzcd}
\end{center}
On the other hand, since $F$ is a leaf of $\sG$, we have $\sO_F(K_F)\cong \sO_X(K_\sG)_{|F}$ and
the map $$H^r(F,\Omega_F^r) \to H^r\big(F,\sO_X(K_\sG)_{|F}\big)$$ is an isomorphism. This immediately implies that $[v \lrcorner \,\omega^r]\neq 0$.

By Serre duality, there is a $\wb{\partial}$-closed $(n,n-r)$-form $\alpha$ with values in 
$\sO_X(-K_\sG)$ such that $$\int_X\alpha\wedge (v \lrcorner \,\omega^r)=1.$$ Applying \cite[Theorem 0.1]{dps5} and using the assumption that $-K_\sG$ is semi-positive, we see that there exists a 
$\wb{\partial}$-closed $(r,0)$-form $\beta$ with values in 
$\sO_X(-K_\sG)$ such that $$\alpha = \omega^{n-r}\wedge \beta.$$ Now, a straightforward computation shows that 
$$\alpha\wedge (v \lrcorner \,\omega^r)=\omega^{n-r}\wedge \beta\wedge(v \lrcorner \,\omega^r)=C\beta(v)\omega^n$$
for some complex number $C \neq 0$. Note that $\beta(v)$ is a holomorphic function, hence constant. We conclude that $\beta(v)$
is non-zero.
It follows that the kernel of the morphism $T_X \to \Omega_X^{r-1}\otimes\sO_X(-K_\sG)$ given by the contraction with $\beta$
is a vector bundle $\sE$ on $X$ such that $T_X \cong \sG\oplus \sE$. 
This finishes the proof of the proposition.
\end{proof}

\begin{rem}
The conclusion of Proposition \ref{prop:criterion_regularity} holds if $-K_\sG$ is only assumed to be equipped with a (possibly singular) hermitian metric $h$ with curvature $i\Theta_{-K_\sG,h} \ge 0$ in the sense of currents whose multiplier ideal sheaf $\sI(h)$ is trivial, $\sI(h)\cong \sO_X$.
\end{rem}

\section{Proofs}

The present section is devoted to the proof of Theorem \ref{thm_intro:main} and Corollaries \ref{cor_intro:main} and \ref{cor_intro:main2}. Theorem \ref{thm_intro:main} is an immediate consequence of Theorem \ref{thm:main} below together with Lemma \ref{lemma:compact_leaf}.

\begin{lemma}[{\cite[Lemma 6.2]{druel15}}]\label{lemma:compact_leaf}
Let $X$ be a complex projective manifold, and let $\sG$
be a foliation on $X$. Suppose that $\sG$ is regular, or that it has a compact leaf. Then the
algebraic part of $\sG$
has a compact leaf.
\end{lemma}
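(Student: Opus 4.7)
Let $\sF$ denote the algebraic part of $\sG$, associated to a dominant rational map $\varphi\colon X\map Y$ (with connected fibers and $Y$ normal projective) satisfying $\sG=\varphi^{-1}\sE$ for some purely transcendental foliation $\sE$ on $Y$. The plan is to exhibit a compact leaf of $\sF$ as a suitable fiber of $\varphi$, treating the two hypotheses on $\sG$ in parallel.

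\emph{Compact leaf case.} Let $L$ be a compact leaf of $\sG$. Since $\sF\subseteq\sG=T_L$ along $L$, any leaf of $\sF$ meeting $L$ at a point where $\sF$ is a subbundle is locally tangent to $L$, hence contained in $L$ by connectedness of leaves. The restriction $\varphi|_L\colon L\map Y$ is a meromorphic map from a compact complex manifold; for a generic $p\in L\cap X^\circ_\sF$, the fiber $\varphi^{-1}(\varphi(p))$ coincides with the closure in $X$ of the leaf of $\sF$ through $p$, lies in $L$, and is therefore compact (being closed in the compact $L$). A generic choice of $p$ ensures that this fiber lies entirely in $X^\circ_\sF$, giving a compact leaf of $\sF$.

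\emph{Regular case.} A general fiber of $\varphi$ is an irreducible projective subvariety $F\subset X$, tangent to $\sF$, of dimension equal to the rank of $\sF$. To conclude that $F$ is a compact leaf of $\sF$, I need $F\subseteq X^\circ_\sF$. The exact sequence $0\to\sF\to\sG\to\sG/\sF\to 0$ has $\sG$ locally free (as $\sG$ is regular) and $\sG/\sF$ torsion-free, so the non-locally-free locus of $\sG/\sF$---which coincides with $X\setminus X^\circ_\sF$---has codimension at least two in $X$. I expect moreover that this locus is a union of $\varphi$-fibers, in which case a dimension count forces its image in $Y$ to have codimension at least two, and a general fiber of $\varphi$ is disjoint from it, hence is the desired compact leaf.

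\emph{Main obstacle.} The key technical point is establishing that the singular locus of $\sF$ is saturated under $\varphi$ (i.e.\ a union of $\varphi$-fibers); this is what reduces the regular case to a clean dimension count. I expect to argue it by passing to a resolution $\pi\colon X'\to X$ of the indeterminacies of $\varphi$, yielding a morphism $\varphi'\colon X'\to Y$ whose relative tangent sheaf is locally free along a general fiber by generic smoothness, and then pushing the information down to $X$ via $\pi$ to control $\sF$ along the corresponding fibers. The parallel subtlety in the compact leaf case is to rule out the pathological possibility that $L$ is entirely contained in the singular locus of $\sF$, which is handled by the same saturation observation.
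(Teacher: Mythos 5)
First, a point of comparison: the paper does not actually prove this lemma --- it is quoted from \cite[Lemma 6.2]{druel15} --- so there is no in-paper argument to measure your proposal against, and it must be judged on its own. Your reduction is the right one (exhibit a fiber of $\varphi$ that is an honest compact leaf of $\sF$, i.e.\ whose closure avoids the locus $\Sing(\sF):=X\setminus X^\circ_{\sF}$ where $\sF$ fails to be a subbundle), but both steps you flag as ``expected'' are genuine gaps, and one of them is not even the correct intermediate statement. In the regular case, $\Sing(\sF)$ need not be a union of $\varphi$-fibers: it is a closed subset of codimension at least two with no a priori lower bound on its dimension, so for instance when $\textup{rank}\,\sF=1$ on a threefold it can be a finite set of points, which cannot be a union of positive-dimensional fibers. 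What you actually need is the weaker statement that no irreducible component of $\Sing(\sF)$ dominates $Y$. That is automatic by dimension count when $\textup{rank}\,\sF\le 1$, but for higher rank a component of dimension between $\dim Y$ and $n-2$ could a priori dominate $Y$, and excluding this is precisely the content of the lemma; it requires an argument that genuinely uses the regularity of $\sG$. Your proposed route --- generic smoothness on a resolution of the indeterminacies of $\varphi$ --- only gives information about general fibers and says nothing about how $\Sing(\sF)$ sits over $Y$.

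The compact-leaf case has the same gap in a sharper form. The two facts you invoke for ``generic $p\in L$'' --- that $\varphi^{-1}(\varphi(p))$ coincides with the closure of the $\sF$-leaf through $p$, and that this fiber misses $\Sing(\sF)$ --- are only available for $p$ general in $X$, whereas the compact leaf $L$ is typically a proper subvariety of $X$ containing no point that is general in $X$. Nothing said so far prevents $L$ from meeting the indeterminacy locus of $\varphi$, the fibers of $\varphi$ through points of $L$ from being special (reducible, of excess dimension, or not leaf closures at all), or $\Sing(\sF)\cap L$ from being a divisor in $L$ that meets the closure of every $\sF$-leaf contained in $L$ (compare the foliation by lines through a point of $\mathbb{P}^{2}$, which could a priori be the trace of $\sF$ on $L$). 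Ruling out these degenerations is exactly what \cite[Lemma 6.2]{druel15} accomplishes, so at the decisive points the proposal assumes what is to be proved.
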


\begin{thm}\label{thm:main}
Let $X$ be a complex projective manifold, and let $\sG$ be a foliation on $X$. Suppose that the algebraic part of $\sG$ has a compact leaf and that $-K_\sG$ is semi-positive. Then there exist a smooth morphism $\phi\colon X \to Y$ onto a complex projective manifold $Y$ and a foliation $\sE$ on $Y$ with $K_\sE \equiv 0$ such that $\sG=\phi^{-1}\sE$.
\end{thm}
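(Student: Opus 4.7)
I propose to follow closely the outline given in the introduction. Since the algebraic part $\sG_{\textup{alg}}$ of $\sG$ has a compact leaf by assumption, and since its space of leaves $Y_0$ is projective (cf.\ \S\ref{family_leaves}), a general leaf of $\sG_{\textup{alg}}$ is compact, being (open in) the general fibre of the family-of-leaves map $\phi_0\colon X\map Y_0$. Applying the maximal rationally connected fibration fibre-wise over $Y_0$, one then obtains a foliation $\sH$ on $X$ with $\sH\subseteq \sG_{\textup{alg}}\subseteq \sG$ whose general leaf is the (compact, rationally connected) generic fibre of the MRC map of a general leaf of $\sG_{\textup{alg}}$. Let $\phi\colon X\map Y$ denote the associated family-of-leaves map, and $\pi\colon Y\to Y_0$ the induced morphism. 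Writing $\sE_0$ for the purely transcendental foliation on $Y_0$ with $\sG = \phi_0^{-1}\sE_0$ (as in \S\ref{pullback_foliations}), one sees from $\sG = (\pi\circ\phi)^{-1}\sE_0 = \phi^{-1}(\pi^{-1}\sE_0)$ that the foliation $\sE := \pi^{-1}\sE_0$ on $Y$ satisfies $\sG = \phi^{-1}\sE$.

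I then claim that $K_\sE$ is pseudo-effective. By the contrapositive of Theorem \ref{thm:campana_paun}, it is enough to show that $\sE$ is not uniruled. A rational curve in $Y$ tangent to $\sE$ through a general point would either project non-trivially via $\pi$ to a rational curve in $Y_0$ tangent to $\sE_0$ through a general point -- excluded because $\sE_0$ is purely transcendental -- or else lie in a fibre of $\pi$. But the general fibre of $\pi$ is an MRC quotient of a compact leaf of $\sG_{\textup{alg}}$, and is therefore not uniruled by the theorem of Graber--Harris--Starr. So $\sE$ is not uniruled and $K_\sE$ is pseudo-effective.

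From the short exact sequence $0 \to \sH \to \sG \to \phi^*\sE \to 0$ valid over an open dense subset, one has $K_\sG \equiv K_\sH + \phi^*K_\sE$. Applying Viehweg's weak positivity theorem to a smooth resolution of $\phi$, and combining the pseudo-effectivity of $K_\sE$ (hence of $\phi^* K_\sE$) with the semi-positivity of $-K_\sG$, I would deduce that $\phi^* K_\sE \equiv 0$, so that $-K_\sH \equiv -K_\sG$ is semi-positive. Since $\sH$ has a compact (rationally connected) leaf by construction, Proposition \ref{prop:criterion_regularity} yields a decomposition $T_X \cong \sH \oplus \sE'$ and in particular the regularity of $\sH$. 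The holomorphic version of Reeb's stability theorem then promotes $\phi$ to a genuine smooth proper morphism $\phi\colon X\to Y$ onto a complex manifold $Y$, necessarily projective because it is the space of leaves of an algebraically integrable foliation. Then $\sG = \phi^{-1}\sE$ and the relation $K_\sG \equiv K_\sH + \phi^* K_\sE$, together with $-K_\sH \equiv -K_\sG$ and the surjectivity of $\phi$, force $K_\sE \equiv 0$.

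The key obstacle is the Viehweg step, which must upgrade the mere pseudo-effectivity of $K_\sE$ to the numerical triviality $\phi^* K_\sE \equiv 0$: this is what transports the semi-positivity of $-K_\sG$ to $-K_\sH$ and unlocks the regularity criterion. A subsidiary care point is to verify rigorously that the fibre-wise MRC construction yields a saturated subfoliation $\sH\subseteq\sG$ on $X$ with the expected compact general leaves, so that Proposition \ref{prop:criterion_regularity} and Reeb stability can be applied without friction.
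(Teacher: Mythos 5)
Your plan reproduces the paper's proof essentially step for step: relative MRC fibration of the algebraic part, non-uniruledness of $\sE$ via Graber--Harris--Starr combined with Campana--P\u{a}un (Theorem \ref{thm:campana_paun}) to get $K_\sE$ pseudo-effective, a weak positivity argument to transfer semi-positivity from $-K_\sG$ to $-K_\sH$, then Proposition \ref{prop:criterion_regularity} and Reeb stability. The one step you explicitly leave open is closed in the paper as follows, and the mechanism is slightly different from the ``upgrade'' you describe. Fix $A$ ample on $X$; for each rational $\varepsilon>0$ write $-K_\sG+\varepsilon A\sim_\mathbb{Q} A_\varepsilon$ with $A_\varepsilon$ effective (semi-positivity of $-K_\sG$). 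Since ${K_\sG}_{|F}\sim K_F$ on a general leaf $F$ of $\sH$, the divisor $K_F+{A_\varepsilon}_{|F}$ is effective, and the weak positivity statement for algebraically integrable foliations (\cite[Proposition 4.1]{druel15}, which is where Viehweg's theorem enters) yields that $K_\sH+A_\varepsilon\sim_\mathbb{Q} K_\sH-K_\sG+\varepsilon A$ is pseudo-effective for every $\varepsilon$; letting $\varepsilon\to 0$, $K_\sH-K_\sG$ is pseudo-effective. On the other hand, the correct global comparison is $K_\sH-K_\sG=-(\phi^*K_\sE+R)$ with $R$ an \emph{effective} divisor (\cite[Section 2.9]{druel15}); your display $K_\sG\equiv K_\sH+\phi^*K_\sE$, justified only over a dense open set, silently drops this ramification-type term and is not a legitimate numerical equivalence on $X$ as written. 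Since $\phi^*K_\sE+R$ is pseudo-effective, the class $K_\sH-K_\sG$ is pseudo-effective and anti-pseudo-effective at once, hence numerically trivial --- and this simultaneously forces $\phi^*K_\sE\equiv 0$ and $R=0$. So the conclusion is not obtained by ``upgrading'' the positivity of $K_\sE$ directly, but by squeezing $K_\sH-K_\sG$ from both sides; with that step made precise (and the term $R$ accounted for), your argument coincides with the paper's.
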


\begin{proof}By assumption, the algebraic part of $\sG$ is induced by an almost proper map.
Its relative maximal rationally connected fibration then defines a foliation $\sH$ on $X$ with a compact leaf.

\begin{claim}
We have $K_\sH \equiv K_\sG$. 
\end{claim}

\begin{proof}
The proof is similar to that of \cite[Proposition 6.1]{druel15} (see also \cite[Proposition 8.1]{bobo} for a somewhat related result).

Let $\psi\colon Z \to Y$ be the family of leaves, and let $\beta\colon Z \to X$ be the natural morphism. By construction, 
$\phi:=\psi \circ\beta^{-1}$ is an almost proper map. Moreover, there is a foliation $\sE$ on $Y$ such that $\sG=\phi^{-1}\sE$
by \cite[Lemma 6.7]{fano_fols}.
By \cite{ghs03}, there is no rational curve tangent to $\sE$ passing through a general point of $Y$.

Let $A$ be an ample divisor on $X$, and
let $F$ be a general (smooth) fiber of $\phi$.
There exists an open set $U \supset F$ such that 
$\sH_{|U}$ is a subbundle of $\sG_{|U}$, $\phi^*\sE_{|U}$ is locally free,
and ${\big(\sG/\sH\big)}_{|U}\cong {\phi^*\sE}_{|U}$.
In particular, we have $${K_\sG}_{|F}\sim_\mathbb{Z} {K_\sH}_{|F}\sim_\mathbb{Z} K_F$$ by the adjunction formula.
Let $\varepsilon > 0$ be a rational number. Since $-K_\sG+\varepsilon A$ is $\mathbb{Q}$-ample, there exists
an effective $\mathbb{Q}$-divisor $A_\varepsilon$ such that $A_\varepsilon \sim_\mathbb{Q}-K_\sG+\varepsilon A$.
Then $K_{F}-{K_\sG}_{|F}+\varepsilon A_{|F}\sim_\mathbb{Q} {A_\varepsilon}_{|F}$
is effective. 
By Proposition \cite[Proposition 4.1]{druel15} applied to  
$\sH$ and $L:=A_\varepsilon$, we conclude
that $K_\sH + A_\varepsilon \sim_\mathbb{Q} K_\sH-K_\sG+\varepsilon A$ is pseudo-effective for any positive rational number $\varepsilon >0$. It follows that $K_\sH-K_\sG$ is pseudo-effective as well.

By \cite[Section 2.9]{druel15}, there is an effective divisor $R$ on $X$ such that
$$K_\sH-K_\sG=-(\phi^*K_\sE+R).$$
On the other hand, $K_\sE$ is pseudo-effective by Theorem \ref{thm:campana_paun}. This easily implies that $\phi^*K_\sE$ is pseudo-effective as well. Therefore, we must have $K_\sH-K_\sG\equiv 0$, proving our claim.
\end{proof}

By the claim, there is a flat line bundle $\sL$ on $X$ such that $\sO_X(K_\sH) \cong \sO_X(K_\sG)\otimes \sL$. On the other hand, $\sL$ admits a unitary smooth hermitian metric with zero curvature. It follows that $-K_\sH$ is semi-positive as well.
By Proposition \ref{prop:criterion_regularity} above, there is a decomposition $T_X \cong \sH\oplus \sH_1$ of $T_X$ into subbundles. In particular, $\sH$ is regular. By construction, its leaves are projective rationally connected manifolds. In particular, they are simply connected. Arguing as in the proof of \cite[Lemma 4.1]{druel_fol_fano}, we see that 
$\beta$ is an isomorphism and that $\psi$ is a smooth morphism onto a complex projective manifold. 
A straightforward computation then shows that $K_\sE \equiv 0$ since $K_\sH\equiv K_\sG$.
This finishes the proof of the theorem.
\end{proof}

\begin{proof}[Proof of Corollary \ref{cor_intro:main}]
The statement follows from Theorem \ref{thm_intro:main} together with \cite[Th\'eor\`eme 1.2]{touzet} using \cite[Lemma 5.9]{bobo}
\end{proof}

\begin{proof}[Proof of Corollary \ref{cor_intro:main2}]
The claim follows from Theorem \ref{thm_intro:main} and \cite[Theorem 5.6]{lpt} (or Proposition \ref{prop:criterion_regularity}).
\end{proof}

\providecommand{\bysame}{\leavevmode\hbox to3em{\hrulefill}\thinspace}
\providecommand{\MR}{\relax\ifhmode\unskip\space\fi MR }
\providecommand{\MRhref}[2]{%
  \href{http://www.ams.org/mathscinet-getitem?mr=#1}{#2}
}
\providecommand{\href}[2]{#2}


\end{document}